\newfont{\nset}{msbm10}
\newtheorem{theo}{Theorem}[section]
\newtheorem{theorem}[theo]{Theorem}
\newtheorem{lemma}[theo]{Lemma}
\newtheorem{definition}[theo]{Definition}
\journal{Theoretical Computer Science}
\begin{document}

\begin{frontmatter}

\title{THE $K$-POWER DOMINATION NUMBER IN SOME SELF-SIMILAR GRAPHS}

\author[lable2]{Yulun Xu}

\author[lable1,lable3]{Qi Bao}

\author[lable1,lable3]{Zhongzhi Zhang}

\ead{zhangzz@fudan.edu.cn}

\address[lable1]{Shanghai Key Laboratory of Intelligent Information
	Processing, Fudan
	University, Shanghai 200433, China}
\address[lable2]{School of Mathematical Sciences, Fudan University, Shanghai 200433, China}
\address[lable3]{School of Computer Science, Fudan
	University, Shanghai 200433, China}
\begin{abstract}

The $k$-power domination problem is a problem in graph theory, which has applications in many areas. However, it is hard to calculate the exact $k$-power domination number since determining k-power domination number of a generic graph is a NP-complete problem. We determine the exact $k$-power domination number in two graphs which have the same number of vertices and edges: pseudofractal scale-free web and Sierpi\'nski gasket. The $k$-power domination number becomes 1 for $k\ge2$ in the Sierpi\'nski gasket, while the $k$-power domination number increases at an exponential rate with regard to the number of vertices in the pseudofractal scale-free web. The scale-free property may account for the difference in the behavior of two graphs.

\end{abstract}

\begin{keyword}

k-power domination number\sep pseudofractal scale-free web\sep Sierpi\'nski graph
\end{keyword}
\end{frontmatter}

\section{Introduction}

Let $\mathcal{V}$ be the vertex set of a graph $G$. Dominating Set (DS) is intensively studied in the graph theory. The basic problem is to find a subset $\mathcal{D}$ of $\mathcal{V}$ so that each vertex $v$ in $\mathcal{V}$ is in $\mathcal{D}$ or $v$ is a neighbor of a vertex in $\mathcal{D}$. In this paper, we focus on a variant of DS problem: $k-$power Dominating Set (k-PDS) problem. This problem is motivated by the need to decide the minimum number of \textit{phase measurement units}(PMU) necessary to monitor an electric power network ~\cite{haynes_domination_2002}. k-PDS problem is different from DS problem by having additional propagation originating from Kirschoff laws.\\

The open neighborhood of a vertex $v$, denoted as $N(v)$ is a set of vertices incident to $v$. The closed neighborhood of a vertex $v$ is $N[v]=N(v)\cup \{v\}$. The open neighborhood (respectively closed neighborhood) of $\mathcal{D}$, denoted as $N(\mathcal{D})$(respectively $N[\mathcal{D}]$) is the union of open neighborhood(respectively closed neighborhood) of elements in $\mathcal{D}$.\\

Denote $M(\mathcal{D})$ as the set of vertices that are \textit{k-power dominated} by $\mathcal{D}$, which is obtained algorithmically as follows, see ~\cite{ dorbec_power_2008, dorfling_note_2006, guo_improved_2008, wang_power_2005}:\\

$\begin{array}{l}{\text{1. Initialize } M(\mathcal{D})=N[\mathcal{D}] \text{;}} \\ {\text{2. (propagation) if a vertex } v \text{ is in } M(\mathcal{D}), \text{ and at most } k \text{ of its neighbors}} \\ {\text{ are not in } M(\mathcal{D}), \text{ then all the neighbors of } v \text{ are inserted into } M(\mathcal{D}) \text{. }}\\ {\text{3. If no new vertex can be located in the step 2 for inclusion, stop. }} \\{\text{Otherwise, go to step 2. }}\end{array}$\\

In other words, in the step 1, $M(\mathcal{D})$ initially contains closed neighborhoods of elements of $\mathcal{D}$. Then in the step 2, $M(\mathcal{D})$ is iteratively enlarged by adding all vertices $w\in \mathcal{V}$ which are incident to a vertex $v$ that has at most k neighbors not in $M(\mathcal{D})$. The step 2 is continued until there is no vertex can be added to $M(\mathcal{D})$. Then we have the set, $M(\mathcal{D})$, \textit{k-power dominated} by $\mathcal{D}$. $\mathcal{D}$ is called a k-power domination set of $G$ if $M(\mathcal{D})=\mathcal{V}$. The k-power domination number of $G$ is the minimum cardinality among k-power domination sets of $G$.\\

The formal definition of k-power dominating set is as follows:\\

\begin{definition}
	Let k be a nonnegative integer. Let $\mathcal{D}$ be a subset of vertex set $\mathcal{V}$ of $G$. Then the sets $(P_{G,k}^i(\mathcal{D}))$ that are k-power dominated by $S$ at step i, for $i\in\{0,1,2,...\}$, are defined as follows:\\
	
	$\begin{array}{l}{\mathcal{P}_{G, k}^{0}(\mathcal{D})=N_{G}[\mathcal{D}],(\text { domination })} \\ {\mathcal{P}_{G, k}^{i+1}(S)=\cup\left\{N_{G}[v]: v \in \mathcal{P}_{G, k}^{i}(S),\left|N_{G}[v] \backslash \mathcal{P}_{G, k}^{i}(S)\right| \leq k\right\}(\text { propagation })}\end{array}$
\end{definition}

Note that $P_{G,k}^i\subset P_{G,k}^{i+1}$. Suppose vertex set $\mathcal{V}$ of $G$ only has finite elements, there exists an integer $i_0$ such that $P_{G,k}^{j}=P_{G,k}^{i_0}. \text{ for } j\ge i_0$. Set $P_{G,k}^{\infty}=P_{G,k}^{i_0}$.\\

The $k$-power Dominating Set problem has received considerable attention. A quantity of previous works focused on $k$-power domination number on various media. It is pointed out that the problem is difficult, since it is a NP-complete problem even when it is restricted to bipartite graphs or chordal graphs, according to ~\cite{haynes_domination_2002}. Efficient algorithms for finding minimum k-power dominating sets have been developed for hypertrees ~\cite{chang_$$varveck$$_2015} and Circular-Arc Graphs ~\cite{liao_power_2013}. Exact k-power domination numbers are determined in some products of paths ~\cite{dorbec_power_2008} and some interconnection networks ~\cite{ rajan_2-power_2015}. Bounds of k-power domination number are obtained in hypertrees ~\cite{chang_$$varveck$$_2015}, Cartesian products of graphs and Petersen graphs ~\cite{barrera_power_2011} and some products of paths ~\cite{dorbec_power_2008}. In addition, scale-free phenomenon, which means that degree of vertices follows a power-law distribution: $P(k) \sim k^{-\gamma}$, is found in many real world problems, such as earthquake ~\cite{abe_scale-free_2004-1} and stock price changes ~\cite{kim_weighted_2002}. However, so far there is no work relating scale-free networks with k-PDS.\\

The omnipresence of power-law phenomenon in real world problems makes it fascinating to find the relation between the power-domination number and scale-free behavior, because this may lead to some applications of the $k$-power domination problem. Since it is difficult to determine the exact k-power domination number of a generic graph, it is intriguing to determine k-power domination number of some special graphs.\\

In this paper, we focus on the k-power domination number in a scale-free graph, called pseudofractal scale-free web ~\cite{dorogovtsev_pseudofractal_2002} ~\cite{zhang_exact_2009}, and the Sierpi\'nski gasket with the same number of vertices and edges. We choose these two graphs to study the dependence of k-power domination property on the scale-free behavior. We determine the exact number of k-power domination number in the pseudofracal scale-free web and Sierpi\'nski gasket. In the pseudofracal scale-free web, the $k$-power domination number increases as an exponential function of the vertices number, while in Sierpi\'nski gasket, the k-power dominating number is 1 for $k\ge2$. The difference between k-power domination numbers of these graphs lies in their distinct architecture: pseudofractal scale-free web is scale-free while Sierpi\'nski gasket is not.\\
 
\section{$k$-power domination in pseudofractal scale-free web}

In this section, we determine the $k$-power domination number in the pseudofractal scale-free web for every positive integer $k$.

\subsection{Network construction and properties}

We construct the pseudofractal scale-free web by induction. Denote $\mathcal{G}_g, g \ge 1$, as the g-generation network. Define $\mathcal{G}_1$ as a triangle with three vertices and three edges. Suppose $\mathcal{G}_g$ is constructed, then $\mathcal{G}_{g+1}$ is constructed by adding a vertex linked to both end vertices of every existent edge. Fig.~\ref{network} shows the first three generations of the scale-free network.

The construction of the network shows the prominent properties observed in various real-life systems. First, the pseudofractal scale-free web is scale-free, because the degree distribution of its vertices obeys a power law form $P(n)\sim n^{1+\frac{ln3}{ln2}}$. Besides, it reveals the small-world effect, since its average distance goes up logarithmically with the number of vertices and when the average clustering converges, it converges to a constant.

Self-similarity is another fascinating property of the scale-free web. This is a pervasive property in the realistic network. We define the initial three vertices as hub vertices, denoted as $A_{g}$, $B_{g}$, and $C_{g}$, respectively. We show this property of the scale-free web by giving another construction method. Given the gth generation network $\mathcal{G}_g$, $\mathcal{G}_{g+1}$ can be obtained by merging three copies of $\mathcal{G}_g$ at their hub vertices.

\begin{figure}
\begin{center}
\includegraphics[width=0.8\linewidth]{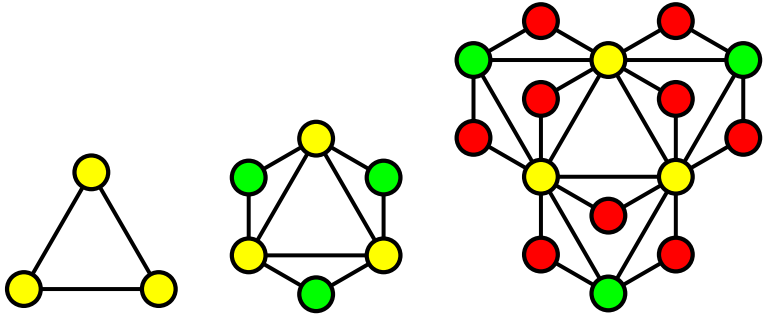}
\end{center}
\caption[kurzform]{The first three generations of the
scale-free network.} \label{network}
\end{figure}

Let $\mathcal{G}_{g}^{\theta}$, $\theta=1,2,3$, be three copies of $\mathcal{G}_{g}$, the hub vertices of which are represented by $A_{g}^{\theta}$, $B_{g}^{\theta}$,
and $C_{g}^{\theta}$, respectively. Then, $\mathcal{G}_{g+1}$ can be obtained by joining $\mathcal{G}_{g}^{\theta}$, with $A_{g}^{1}$
(resp. $C_{g}^{1}$, $A_{g}^{2}$) and $B_{g}^{3}$ (resp. $B_{g}^{2}$,
$C_{g}^{3}$) being identified as the hub vertex $A_{g+1}$ (resp.
$B_{g+1}$, $C_{g+1}$) in $\mathcal{G}_{g+1}$.

\subsection{$K$-power domination number of pseudofractal scale-free web}

First we determine the $k$-power domination number in an easy case. To do that we define a condition about $k$-power dominating sets of pseudofractal scale-free web, because when we use the mathematical induction to determine the $k$-power domination number, a subset of $V(\mathcal{G}_g^{\theta})$ that corresponds to a k-power domination set of $\mathcal{G}_g$ may not still k-power dominates $\mathcal{G}_g^{\theta}$ in $\mathcal{G}_{g+1}$, which is a result of the fact that the degrees of two of hub vertices of $\mathcal{G}_g^{\theta}$ in $\mathcal{G}_{g+1}$ will be bigger than those of $\mathcal{G}_g$, so we set one of these hub vertices as a $k$-power domination set in the definition of condition 1 and we don't use the other hub vertex to monitor its neighborhood in the propagation steps. Thus the  $k$-power dominating set of $\mathcal{G}_g$ in the condition 1 will still k-power dominates $\mathcal{G}_g^{\theta}$ in $\mathcal{G}_{g+1}$.

\begin{definition}
A $k$-power dominating set $S$ of $\mathcal{G}_g$ satisfies the condition 1 if there exists a sequence of subsets $\{D_i\}_{i=1}^{\infty}$ of V($\mathcal{G}_g$) satisfying the following conditions:
\par(1)$P(D_1)=N_{\mathcal{G}_g}[D_1]. $ $P(D_i)=D_i\cup \{N_{\mathcal{G}_g}[v]: v\neq C_g, v\in D_i, |N_{\mathcal{G}}[v]\backslash D_i|\leq k\}$ for $i\ge 2$. We have $D_i\subset D_{i+1}\subset P(D_i)$,
\par(2)$D_1=S=\{A_g\}$,
\par(3)There exists i so that $D_i=V(\mathcal{G}_g)$.
\end{definition}

Remark: a k-power domination set S is also said to satisfy the condition 1 if $A_g$ and $C_g$ in the definition above are substituted by any two different vertices among $A_g, B_g$ and $C_g$. Fig.~\ref{condition 1} shows a 3-power domination set in $\mathcal{G}_3$ that satisfies the condition 1\\

\begin{figure}
	\begin{center}
		\includegraphics[width=0.8\linewidth]{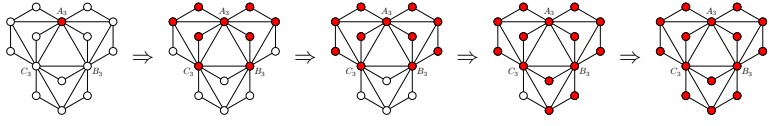}
	\end{center}
	\caption[kurzform]{The illustration of a 3-power domination set in $\mathcal{G}_3$ that satisfies the condition 1.} \label{condition 1}
\end{figure}

\begin{lemma}\label{leSGDom01}
 If integers $k$ and $g$ satisfy $k\ge 2^{g-1}-1$, $g\ge 1$, then the k-power domination number of $\mathcal{G}_g$  is 1. To be more specific, $P_{\mathcal{G}_g,k}^{\infty} (\{A_n\})=V(\mathcal{G}_g)$
\end{lemma}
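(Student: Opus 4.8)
The plan is to prove, by induction on $g$, the stronger statement that the singleton $\{A_g\}$ satisfies condition 1, from which the lemma follows at once: condition 1 only ever uses a subset of the moves allowed by the unrestricted $k$-power propagation (it starts from one vertex and it forbids propagation from $C_g$), so if the restricted process already exhausts $V(\mathcal{G}_g)$, then $P_{\mathcal{G}_g,k}^{\infty}(\{A_g\})=V(\mathcal{G}_g)$ as well, and the $k$-power domination number is $1$. Throughout I would lean on the $S_3$-symmetry of $\mathcal{G}_g$ permuting the three hubs $A_g,B_g,C_g$; this is exactly what justifies the Remark, namely that condition 1 may be read with any ordered pair of distinct hubs playing the roles of $(A_g,C_g)$. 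The base case $g=1$ is immediate: here $k\ge 2^0-1=0$, and $N_{\mathcal{G}_1}[A_1]$ is already the whole triangle, so $D_2=V(\mathcal{G}_1)$.

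For the inductive step, assume the claim for $\mathcal{G}_g$ and let $k\ge 2^g-1$. I would decompose $\mathcal{G}_{g+1}$ into the three copies $\mathcal{G}_g^1,\mathcal{G}_g^2,\mathcal{G}_g^3$, recording the identifications $A_{g+1}=A_g^1=B_g^3$, $B_{g+1}=C_g^1=B_g^2$, and $C_{g+1}=A_g^2=C_g^3$. Two geometric facts drive the argument. First, each hub has degree $2^g$ inside a single copy, so a merged super-hub has degree $2^{g+1}$ in $\mathcal{G}_{g+1}$, split evenly between its two copies, while every non-merged vertex keeps the same neighborhood it had in its copy. Second, the three hubs of any copy are pairwise adjacent; this is itself an easy induction (any two hubs of $\mathcal{G}_{g+1}$ are joined by an edge lying inside the single copy that contains both), and it is this fact that makes the later counting work. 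Since $k\ge 2^g-1>2^{g-1}-1$, the inductive hypothesis applies to each copy.

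The sweep then runs as follows. Starting from $\{A_{g+1}\}$, the initialization already dominates $N_{\mathcal{G}_{g+1}}[A_{g+1}]$, which contains the full copy-$1$ neighborhood of $A_g^1$ and the full copy-$3$ neighborhood of $B_g^3$; thus in copy $1$ the seed $A_g^1$ and in copy $3$ the seed $B_g^3$ each have their closed neighborhood already dominated, matching the state of condition 1 right after its initialization step. By the inductive hypothesis I run condition 1 on copy $1$ with seed $A_g^1$ and forbidden hub $C_g^1$, and on copy $3$ with seed $B_g^3$ and forbidden hub $C_g^3$; every propagation invoked is from a non-merged vertex of its copy, whose neighborhood is unchanged in $\mathcal{G}_{g+1}$, so each step remains legal. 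This dominates copies $1$ and $3$, and in particular dominates both merged hubs of copy $2$, namely $B_g^2=C_g^1$ and $A_g^2=C_g^3$. Now the crucial propagation: from $A_g^2=C_{g+1}$ the only undominated neighbors lie in copy $2$, and among its $2^g$ copy-$2$ neighbors one is the already-dominated hub $B_g^2$, leaving exactly $2^g-1\le k$ undominated neighbors, so $A_g^2$ may propagate and dominate all of $N_{\mathcal{G}_g^2}[A_g^2]$. This recreates the seed configuration of condition 1 inside copy $2$, so a final application of the inductive hypothesis (seed $A_g^2$, forbidden $B_g^2$) dominates copy $2$ and hence all of $\mathcal{G}_{g+1}$. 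It remains to check the bookkeeping that upgrades this to condition 1 for $\mathcal{G}_{g+1}$: the one super-hub never used as a propagator is $B_{g+1}$, since $B_{g+1}=C_g^1$ is forbidden in copy $1$, $B_{g+1}=B_g^2$ is forbidden in copy $2$, and $B_{g+1}$ is not a vertex of copy $3$; hence the concatenated sequence realizes condition 1 with seed $A_{g+1}$ and forbidden hub $B_{g+1}$.

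The main obstacle is precisely the single step that seeds copy $2$. The naive count gives $2^g$ undominated neighbors for each merged hub, which overshoots the available budget $k=2^g-1$ by exactly one, so a single dominated hub cannot by itself break into a fresh copy. The argument closes only because copies $1$ and $3$ together dominate \emph{both} merged hubs of copy $2$, and because these two hubs are adjacent, so each serves as a dominated neighbor of the other and trims the deficit to exactly $2^g-1$. Getting the order of the sweep right—copy $2$ must be processed last—and tracking precisely which hub is never propagated from, so that the induction reproduces condition 1 rather than merely plain domination, are the two places where care is genuinely needed.
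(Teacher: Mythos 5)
Your proof is correct and follows essentially the same route as the paper's: decompose $\mathcal{G}_{g+1}$ into three copies, apply the condition-1 inductive hypothesis to the two copies meeting $A_{g+1}$, then break into the last copy via a merged hub whose undominated-neighbor count drops to $2^g-1\le k$ precisely because the other merged hub is already dominated and adjacent, and finish with condition 1 inside that copy. The only differences are cosmetic (you propagate from $C_{g+1}$ where the paper uses $B_{g+1}$, a mirror image under the hub symmetry) plus your more explicit bookkeeping of which hub is never a propagator, which the paper leaves implicit but needs for its induction to close.
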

\begin{proof}It  suffices to prove that there exists a k-power dominating set $S$ of $\mathcal{G}_g$ satisfying the condition 1 for $k\ge 2^{g-1}-1$ by induction on $g$. When $g=1$, we can verify the assertion by hand. Thus the basis step holds immediately. Then suppose the assertion holds for $g=t$, $t\ge 1$, $k\ge 2^t-1$. Because $A_{t+1}$ coincides with $A_{t}^1$ and $B_{t}^3$ and  $k\ge 2^{t}-1\ge 2^{t-1}-1$, using the assertion for $g=t$ we have that $\{A_{t+1}\}$ satisfies the condition 1 in $\mathcal{G}_{t}^1$. Because $A_{t}^1$ and $C_{t}^1$ are the only vertices in $\mathcal{G}_{t}^1$ that have bigger degrees than the corresponding vertices of $\mathcal{G}_{t}$ and the neighbors of $A_{t+1}^1$ are monitored by $A_{t+1}^1$, according to the definition of the condition 1, we have that $V(\mathcal{G}_{t}^1)\subset  P_{\mathcal{G}_{t+1},k}^{\infty}(\{A_{t+1}\})$. For the same reason, we have that $V(\mathcal{G}_{t}^3)\subset  P_{{\mathcal{G}_{t+1}},k}^{\infty}(\{A_{t+1}\})$
Thus we have that
$V(\mathcal{G}_{t}^1)\cup V(\mathcal{G}_{t}^3) \subset P_{\mathcal{G}_{t+1},k}^\infty (\{A_{t+1}\})$. Since $(\{B_{t+1}, C_{t+1}\}\cup V(\mathcal{G}_{t}^1)\cup V(\mathcal{G}_{t}^3))\subset P_{\mathcal{G}_{t+1}, k}^{\infty}(\{A_{t+1}\})$ and $|N(B_{t+1})|=2^{t+1}$, we have that $|N(B_{t+1})
\setminus P_{\mathcal{G}_{t+1}, k}^{\infty}(\{A_{t+1}\})|\leq 2^{t}-1$. Because $k\ge 2^{t}-1$ by hypothesis, $N(B_{t+1})\subset P_{\mathcal{G}_{t}, k}^{\infty}({A_{t+1}})$. Then like what we have discussed before about $A_{t+1}$ in $\mathcal{G}_{t}^1$, we have $V(\mathcal{G}_{t}^2)=P_{\mathcal{G}_{t}^2,k}^{\infty}({B_{t}^2})\subset P_{\mathcal{G}_{t+1},k}^{\infty}({A_{t+1}}).$ Then we have proved that $P_{\mathcal{G}_{t+1}, k}^\infty (\{ A_{t+1} \} )=V(\mathcal{G}_{t+1})$ holds for $g=t+1$.\\
This concludes the proof of the lemma.
\end{proof}~\\

Next we determine the $k$-power domination number of the pseudofractal scale-free web in a general case. To do this, we first define a new sequence of graphs. Then we use the vertex cover number of this sequence of graphs to determine the $k$-power domination number of the pseudofractal scale-free web.

\begin{definition}\label{$I_g$}
Let $I_1$ denote the graph with two vertices linked by an edge. Let $I_g$ denote the $(g-1)$-generation network $\mathcal{G}_{g-1}$ for $g\ge 2$. Fig.~\ref{Ig} illustrates $I_1$, $I_2$ and $I_3$. 
\end{definition}

\begin{figure}
	\begin{center}
		\includegraphics[width=0.8\linewidth]{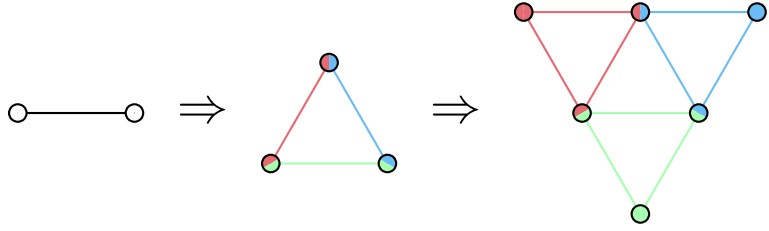}
	\end{center}
	\caption[kurzform]{The first three generations of $I_g$.} \label{Ig}
\end{figure}

For any $g\ge n$, $\mathcal{G}_g $ can be obtained by merging $3^{g-n}$ replicas of $\mathcal{G}_n$, $\mathcal{G}_n^\theta, \theta =1,2,3,...,3^{g-n}$ at their hub vertices, see Fig.~\ref{FG-1}\\

Next We prove that the vertex cover number of $I_g$ gives the $k$-power domination number of pseudofractal scale-free web.

\begin{lemma}\label{leSGDom02}
Let $\phi_g$  be the vertex cover number of $I_g$. Let $\gamma_{P,k}(\mathcal{G}_g)$ be the k-power domination number of $\mathcal{G}_g$. If integers g, k and n satisfy $g\ge n$ and $2^n-2\ge k\ge 2^{n-1}-1$, we have $\gamma_{P,k}(\mathcal{G}_g)=\phi_{g-n+1}$.
\end{lemma}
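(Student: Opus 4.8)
The plan is to prove the two inequalities $\gamma_{P,k}(\mathcal{G}_g)\le \phi_{g-n+1}$ and $\gamma_{P,k}(\mathcal{G}_g)\ge \phi_{g-n+1}$ separately, after first fixing the combinatorial dictionary between the two graphs. Since $\mathcal{G}_g$ is obtained by merging $3^{g-n}$ copies of $\mathcal{G}_n$ at hub vertices, and since passing from $\mathcal{G}_{g-n}=I_{g-n+1}$ to $\mathcal{G}_{g-n+1}$ replaces every edge by an elementary triangle (the skeleton of one copy of $\mathcal{G}_n$, and the edge count $3^{g-n}$ matches the number of copies), I would set up a bijection: \emph{cells} (the copies of $\mathcal{G}_n$) $\leftrightarrow$ edges of $I_{g-n+1}$, and \emph{shared hubs} (the merge vertices) $\leftrightarrow$ vertices of $I_{g-n+1}$. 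The fact to record is that the cell $C$ corresponding to an edge $uv$ has exactly two hubs ($u,v$) shared with neighbouring cells and one private apex hub belonging to $C$ alone; every non-hub vertex of $C$, and the apex, touches the rest of $\mathcal{G}_g$ only through $u$ and $v$, and each hub has degree $2^n$ inside every cell containing it.

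For the upper bound I would take a minimum vertex cover $W$ of $I_{g-n+1}$ and put $S=W$, viewing each $w\in W$ as the corresponding shared hub of $\mathcal{G}_g$. Because $W$ is a cover, every cell $C=uv$ has $u\in S$ or $v\in S$; letting such a seeded hub play the role of $A_n$, the other shared hub the role of $C_n$, and the low-degree apex the role of $B_n$, Lemma~\ref{leSGDom01} together with condition~1 shows that $S$ dominates all of $C$ without ever propagating from the high-degree avoided hub. Here $k\ge 2^{n-1}-1$ is exactly what makes a single seeded hub dominate a copy of $\mathcal{G}_n$, and condition~1 is precisely what keeps this valid after the two shared hubs gain degree in $\mathcal{G}_g$. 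As the cells cover $V(\mathcal{G}_g)$, this gives $P_{\mathcal{G}_g,k}^{\infty}(S)=V(\mathcal{G}_g)$ and hence $\gamma_{P,k}(\mathcal{G}_g)\le |W|=\phi_{g-n+1}$.

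For the lower bound, let $S$ be any $k$-power dominating set. The key blocking step is: if $S\cap V(C)=\emptyset$ for some cell $C=uv$, then $C$ cannot be fully dominated. Indeed, every private vertex of $C$ has all its neighbours inside $C$, so the first private vertex of $C$ to enter $P_{\mathcal{G}_g,k}^{\infty}(S)$ must be supplied by propagation from an already-dominated vertex of $C$; at that moment the only dominated vertices of $C$ lie among the shared hubs $u,v$, since no seed lies in $C$. But a shared hub has $2^n$ neighbours inside $C$, of which at most one (the other hub) is dominated, leaving at least $2^n-1>k$ undominated neighbours, so it cannot propagate. This contradiction is exactly where $k\le 2^n-2$ enters. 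Consequently $S$ meets every cell, i.e. every edge of $I_{g-n+1}$.

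It remains to turn ``$S$ meets every cell'' into a vertex cover of the same size. I would set $W_1=S\cap\{\text{shared hubs}\}$ and, for each edge $uv$ with $u,v\notin S$, use the nonempty set $S\cap \mathrm{private}(C_{uv})$ to justify adding $u$ to a set $W_2$; then $W=W_1\cup W_2$ is a vertex cover. Because the chosen private seeds lie in pairwise disjoint cells and are disjoint from $W_1\subseteq S$, one obtains an injection $W\hookrightarrow S$, whence $\phi_{g-n+1}\le |W|\le |S|$; minimising over $S$ yields $\gamma_{P,k}(\mathcal{G}_g)\ge \phi_{g-n+1}$, and combining with the upper bound gives the claim. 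The main obstacle is making the cell/edge dictionary and the degree bookkeeping precise — that the apex is genuinely private and that every hub has internal degree exactly $2^n$ — and then the blocking argument: isolating the first propagation step able to enter an uncovered cell and showing the count $2^n-1>k$ forbids it; the charging injection for the lower bound likewise requires the disjointness of the private parts to be argued carefully.
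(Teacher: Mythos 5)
Your proposal is correct and follows the paper's core strategy: decompose $\mathcal{G}_g$ into $3^{g-n}$ copies of $\mathcal{G}_n$ glued along shared hub vertices with gluing pattern $I_{g-n+1}$, use $k\le 2^n-2$ in a blocking argument (a shared hub has $2^n$ neighbours inside an unseeded cell, at most one of which can be dominated, so propagation into that cell is impossible) to show every $k$-power dominating set meets every copy, use Lemma~\ref{leSGDom01} together with condition~1 and $k\ge 2^{n-1}-1$ to show that seeding one shared hub per cell dominates everything, and read off the answer as the vertex cover number of $I_{g-n+1}$. The one place you genuinely diverge is the lower-bound bookkeeping: the paper takes a minimum dominating set, asserts that it contains exactly one vertex per copy, and replaces each non-hub seed by a shared hub, claiming without proof that the modified set still dominates $\mathcal{G}_g$; you instead convert ``$S$ meets every cell'' directly into a vertex cover via the charging injection ($W_1=S\cap\{\text{shared hubs}\}$ plus one endpoint per hub-free cell, charged to that cell's private seed), which never requires the constructed set to dominate anything. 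Your version is the more rigorous of the two, since it bypasses both unproven claims in the paper (one seed per copy, and preservation of domination under substitution) at the cost of only the disjointness of the private parts of distinct cells, which you correctly flag as the point needing care and which does hold because distinct cells intersect only in shared hubs.
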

\begin{proof} $\mathcal{G}_g$ can be constructed by making $3^{g-n}$ replicas of $\mathcal{G}_g$ and merging them at their hub vertices.
\begin{figure}
	\begin{center}
		\includegraphics[width=0.8\linewidth]{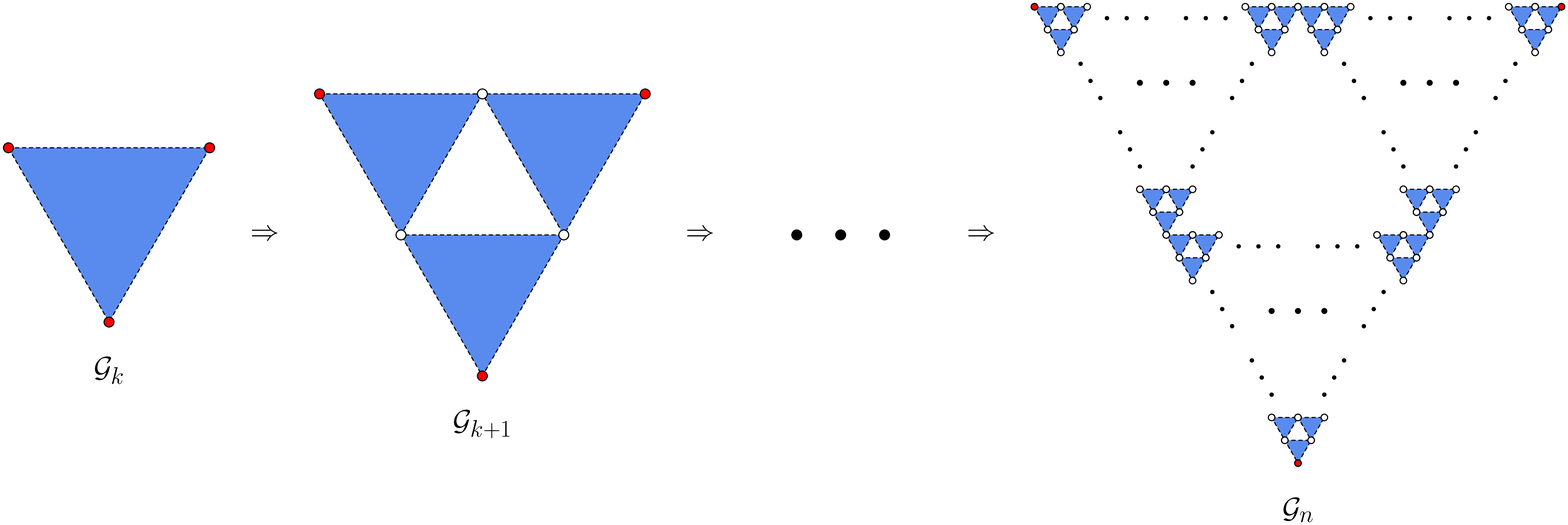}
	\end{center}
	\caption[kurzform]{The illustration of the constuction of $\mathcal{G}_g$ with replicas of $\mathcal{G}_k$, $k\le g$.} \label{FG-1}
\end{figure}

For every $\theta _0$, there are two vertices of $\mathcal{G}_n^{\theta_0}$ whose neighborhoods include some vertices that are not in $\mathcal{G}_n^{\theta_0}$ and the degrees of these two vertices are at least $2^{n+1}$. Then every vertex in $[\bigcup_{\theta=1}^{\theta_0-1}V(\mathcal{G}_n^\theta)]\cup [\bigcup_{\theta=\theta_0+1}^{3^{g-n}}V(\mathcal{G}_n^\theta)]$ has zero or at least  $2^n-1$ adjacent vertices that are in $V(\mathcal{G}_{n+1})\setminus [\bigcup_{\theta=1}^{\theta_0-1}V(\mathcal{G}_n^\theta)]\cup[\bigcup_{\theta=\theta_0+1}^{3^{g-n}}V(\mathcal{G}_N^\theta)])$. Since $2^n-2\le k\le 2^{n-1}-1$, if $S\cap \mathcal{G}_n^\theta=\emptyset$, then
\begin{equation}
P_{\mathcal{G}_g,k}^\infty(S)\cap\{V(\mathcal{G}_g)\setminus[\bigcup_{\theta=1}^{\theta_0-1}V(\mathcal{G}_n^\theta)]\cup [\bigcup_{\theta=\theta_0+1}^{3^{g-n}}V(\mathcal{G}_n^\theta)]\}=\emptyset.
\end{equation}
So if S is a k-power domination set of $\mathcal{G}_g$, then $S\cap V(\mathcal{G}_n^{\theta_0})\neq \emptyset$. According to the Lemma~\ref{leSGDom01} , $\gamma_{P, k}^{\infty}(\mathcal{G}_n^{\theta_0})=1$. So there is only one vertex in $S\cap V(\mathcal{G}_n^{\theta_0})$. If the vertex $v_n^{\theta_0}$ in $S\cap V(\mathcal{G}_n^{\theta_0})$ is not one of the hub vertices that is in $V(\mathcal{G}_n^{\theta_0})\cap V(\mathcal{G}_n^{\theta_1})$ for some $\theta_1$. We can substitute $v_n^{\theta_0}$ with one of such hub vertices. The new set is also a k-power domination set of $\mathcal{G}_g$. Through the above procedure, we can obtain a dominating set S', each of whose vertices is in $V(\mathcal{G}_n^{\theta_1})\cap V(\mathcal{G}_n^{\theta_2})$ for some $\theta_1$ and $\theta_2$. Since $\{V(\mathcal{G}_n^{\theta_1})\cap V(\mathcal{G}_n^{\theta_2}): 1\le \theta_1 < \theta_2\le 3^{g-n}\}$ and the edges between these vertices constitute the graph $I_{g-n+1}$, so we can see that $\gamma_{P,k}(\mathcal{G}_g)=\phi_{g-n+1}$
\end{proof}~\\

In order to determine $\phi_g$, we define some intermediate quantities. As is shown above, there are three hub vertices in $I_g$ for $g\ge2$. According to the definition of the vertex cover, any vertex cover must contain at least two hub vertices. Then all vertex covers of the $I_g$ can be sorted into two classes: $D_g^1$, $D_g^2$, where $D_g^k$, $k=1,2,$ represent those vertex covers, each of which includes exactly $k+1$ hub vertices. Let $\gamma_g^k$ be the minimum cardinalitry among all the elements in the $D_g^k$. Let $\Theta_g^k$ be a element in the $D_g^k$ that has the minimum cardinality.

\begin{lemma}\label{leSGDom03}
The vertex cover number of  $\phi_g, g\ge 2,$ is $\phi_g=min\{\gamma_g^1, \gamma_g^2\}$.
\end{lemma}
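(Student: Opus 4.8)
The plan is to recognize this lemma as essentially a partition statement: every vertex cover of $I_g$ falls into exactly one of the two classes $D_g^1$ and $D_g^2$, so the overall minimum $\phi_g$ is simply the smaller of the two per-class minima. The argument is therefore a short case-split followed by a minimization over a union, with the only substantive geometric input being the triangle formed by the three hub vertices.

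First I would invoke the fact already recorded above, namely that any vertex cover of $I_g$ must contain at least two of the three hub vertices. The reason is that for $g\ge 2$ the three hubs $A_{g-1}$, $B_{g-1}$, $C_{g-1}$ of $I_g=\mathcal{G}_{g-1}$ are pairwise adjacent: the original triangle of $\mathcal{G}_1$ survives into every generation, since the construction only adds vertices and never deletes edges. To cover all three edges of this triangle, at least two of its three vertices must lie in any vertex cover, because a single hub vertex leaves one triangle edge uncovered.

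Next, since there are only three hub vertices in total, a vertex cover that contains at least two of them contains either exactly two or exactly three. Hence every vertex cover of $I_g$ lies in $D_g^1$ (exactly two hubs) or in $D_g^2$ (all three hubs), and conversely every member of $D_g^1\cup D_g^2$ is by definition a vertex cover. Therefore the family of all vertex covers of $I_g$ is precisely $D_g^1\cup D_g^2$, with no vertex cover left unaccounted for.

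Finally I would take minima. By definition $\phi_g$ is the least cardinality of any vertex cover, while $\gamma_g^k=\min_{X\in D_g^k}|X|$ for $k=1,2$. Minimizing cardinality over the union $D_g^1\cup D_g^2$ and using that the minimum over a union of two sets equals the smaller of the two individual minima yields $\phi_g=\min\{\gamma_g^1,\gamma_g^2\}$. I do not expect a genuine obstacle here; the only real content is the triangle argument forcing at least two hubs into every cover, and that is already supplied by the discussion preceding the lemma, so the remainder is pure bookkeeping.
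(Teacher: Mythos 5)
Your proof is correct and takes essentially the same route as the paper, which in fact states this lemma without any explicit proof: the paper relies on the preceding observation that every vertex cover of $I_g$ must contain at least two of the three hub vertices, so that the covers are partitioned into $D_g^1\cup D_g^2$ and the minimum splits accordingly. Your writeup merely supplies the details the paper leaves implicit, namely the triangle argument (the three hubs are pairwise adjacent in every generation, forcing two of them into any cover) and the bookkeeping that the minimum over a union of the two classes is the smaller of the two class minima.
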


After reducing the problem of determining $\phi_g$ to computing $\gamma_g^1, \gamma_g^2$, next we evaluate $\gamma_g^1$ and$ \gamma_g^2$ using the fact that the network is self-similar.

\begin{lemma}\label{leSGDom04}
 For two successive generation networks $I_g$ and $I_{g+1}$,$g\ge2$,
 \begin{small}
 \begin{align}\label{Dom01}
 &\gamma_{g+1}^1=min\{3\gamma_g^1-2,2\gamma_g^1+\gamma_g^2-3\},
\end{align}
\begin{align}\label{Dom02}
&\gamma_{g+1}^2=min\{\gamma_g^1+2\gamma_g^2-3, 3\gamma_g^2-3\},
\end{align}
\end{small}
\end{lemma}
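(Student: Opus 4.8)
The plan is to exploit the self-similar decomposition of $I_{g+1}=\mathcal{G}_g$ into three copies $I_g^1,I_g^2,I_g^3$ of $I_g=\mathcal{G}_{g-1}$, glued only at the three hub vertices $A_{g+1},B_{g+1},C_{g+1}$. Reading off the identification rule, each global hub is the common point of exactly two copies, and each copy contributes exactly two of its own three hubs to $\{A_{g+1},B_{g+1},C_{g+1}\}$, retaining its third hub as an interior vertex of $I_{g+1}$; thus the copies and the global hubs realise the incidence pattern of a triangle (copy $\leftrightarrow$ edge, global hub $\leftrightarrow$ vertex). First I would record the elementary but crucial fact that no edge of $I_{g+1}$ joins two different copies, so that a set $S$ is a vertex cover of $I_{g+1}$ if and only if each restriction $S\cap V(I_g^\theta)$ is a vertex cover of the copy $I_g^\theta$. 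I would also invoke the symmetry of $I_g=\mathcal{G}_{g-1}$ under permutations of its hubs, so that the minimum size of a vertex cover of $I_g$ meeting a prescribed set of exactly two (resp. three) hubs depends only on the number of prescribed hubs and equals $\gamma_g^1$ (resp. $\gamma_g^2$).

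The second ingredient is a size count. Since the only vertices shared between copies are the global hubs, each lying in exactly two copies, and since no vertex lies in all three, inclusion--exclusion gives $|S|=\sum_{\theta=1}^{3}|S\cap V(I_g^\theta)|-h(S)$, where $h(S)$ is the number of global hubs contained in $S$. For a cover in $D_{g+1}^1$ one has $h(S)=2$, and for a cover in $D_{g+1}^2$ one has $h(S)=3$.

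Next comes the case analysis, which is driven by the observation that each copy contains the triangle on its three hubs, so every cover must place at least two hubs in each copy. For $\gamma_{g+1}^2$ all three global hubs are selected, so inside every copy its two global hubs are already present; each copy is then free, independently, to exclude its interior hub (a $D_g^1$ cover, cost $\gamma_g^1$) or to include it (a $D_g^2$ cover, cost $\gamma_g^2$). Minimising copy-by-copy and subtracting $h(S)=3$ produces the recursion for $\gamma_{g+1}^2$. For $\gamma_{g+1}^1$ exactly two global hubs are selected; the single copy containing both of them is free (cost $\gamma_g^1$ or $\gamma_g^2$), whereas each of the other two copies contains only one selected global hub and so, to reach its two-hub minimum, is forced to include its interior hub, i.e. forced into a $D_g^1$ cover of cost $\gamma_g^1$. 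Minimising over the free copy and subtracting $h(S)=2$ yields the recursion for $\gamma_{g+1}^1$. In each case the two expressions inside the minimum correspond precisely to the two admissible types of the free copy.

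Finally I would close the argument in both directions: the upper bounds by gluing optimal copy-covers of the prescribed types (checking they agree on the shared hubs, which they do since the hub pattern is fixed in advance), and the matching lower bounds by showing that an arbitrary cover of the given global-hub type restricts to copy-covers of exactly the enumerated types, so its size is at least the stated minimum. I expect the main obstacle to be the precise bookkeeping of the shared hubs: getting the inclusion--exclusion correction $h(S)$ exactly right, and verifying in the $D_{g+1}^1$ case that the forced inclusion of the two interior hubs in the constrained copies creates no hidden overlap or saving beyond the $\gamma_g^1$ already charged. This shared-hub accounting is the step where an off-by-one is easiest to commit, and where the symmetry reduction---that a copy forced to carry a specified pair or triple of hubs still costs exactly $\gamma_g^1$ or $\gamma_g^2$---must be applied with care.
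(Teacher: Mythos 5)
Your decomposition is exactly the one the paper's proof uses --- the paper argues ``graphically'' via Fig.~\ref{F7} with two sentences of text, and your plan is the rigorous version of that same case analysis. Every structural ingredient you list is correct: edges of $I_{g+1}$ never cross copies, $|S|=\sum_{\theta}|S\cap V(I_g^{\theta})|-h(S)$ with $h(S)$ the number of global hubs in $S$, hub-permutation symmetry of $I_g$, and the fact that a copy containing only one selected global hub is forced into a $D_g^1$-type cover. The gap is at the very last step, where you assert without computing that this bookkeeping ``produces the recursion.'' It does not. Carried out faithfully, your count gives
\begin{align*}
\gamma_{g+1}^1&=\min\{3\gamma_g^1-2,\ 2\gamma_g^1+\gamma_g^2-2\},\\
\gamma_{g+1}^2&=\min\{3\gamma_g^1-3,\ 2\gamma_g^1+\gamma_g^2-3,\ \gamma_g^1+2\gamma_g^2-3,\ 3\gamma_g^2-3\}=3\min\{\gamma_g^1,\gamma_g^2\}-3,
\end{align*}
neither of which is the printed statement: in \eqref{Dom01} the overlap correction for a cover with two global hubs is $-2$, not $-3$, and in \eqref{Dom02} there is no forced copy at all --- all three copies choose their type freely --- so four sums occur, not two. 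Your closing remark that ``the two expressions inside the minimum correspond precisely to the two admissible types of the free copy'' is inconsistent with your own (correct) analysis of the $\gamma_{g+1}^2$ case, which has three free copies.

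To be clear, the off-by-one you warned about sits in the paper's statement, not in your method: the lemma as printed is false. Check it against the paper's own hand-computed values $\gamma_2^1=2$, $\gamma_2^2=3$, $\gamma_3^1=4$, $\gamma_3^2=3$. Printed \eqref{Dom02} at $g=2$ gives $\min\{2+6-3,\,9-3\}=5$, contradicting $\gamma_3^2=3$, while your accounting gives $3\cdot 2-3=3$. Printed \eqref{Dom01} at $g=3$ gives $\min\{10,\,8\}=8$, whereas the true value is $9$: two copies are forced to cost $\gamma_3^1=4$ each, the copy containing both selected hubs costs at least $\min\{\gamma_3^1,\gamma_3^2\}=3$, and only two vertices are double-counted. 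Indeed the paper itself quietly uses your corrected form $2\gamma_g^1+\gamma_g^2-2$ of \eqref{Dom01} in the proof of Lemma~\ref{leSGDom05}, and printed \eqref{Dom02} returns the right value only in the regime $\gamma_g^1\ge\gamma_g^2$ (i.e.\ $g\ge3$), which is all that is needed downstream. So your approach is the correct one (and essentially the intended one), but as a proof of the statement as written it fails at the final identification; an honest write-up must either prove the corrected recursions displayed above and flag the misprint, or explain why the printed minima --- which your own case analysis contradicts --- could hold.
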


\begin{figure}
	\begin{center}
		\includegraphics[width=0.8\linewidth]{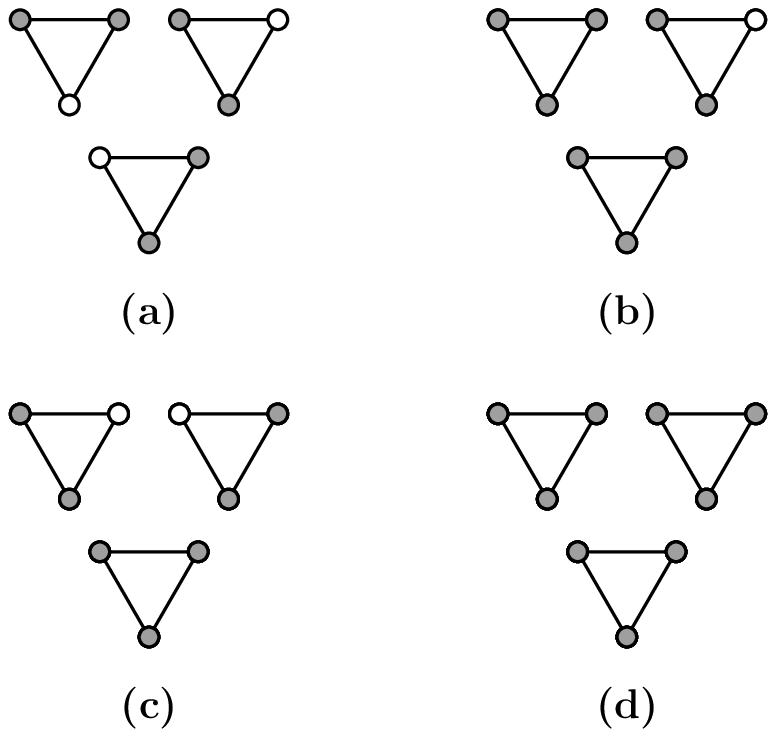}
	\end{center}
	\caption[kurzform]{Illustrations of all possible constructions of $\Theta_{g+1}^1$ and $\Theta_{g+1}^2$. Only the hub vertices of $I_g$ are shown. Open vertices are not in the dominating sets. Solid vertices are in the dominating sets} \label{F7}
\end{figure}

\begin{proof} We prove the lemma graphically, see Fig.~\ref{F7}. Note that $I_{g+1}$ is composed of three copies $I_g^\theta$, $\theta=1,2,3$ of $I_g$. By definition, two of the three hub vertices of $I_{g+1}$ are in $\Theta_{g+1}^1$. So those corresponding vertices of $I_g^\theta$ are in those corresponding vertex covers. Thus we can construct $\Theta_{g+1}^1$ from $\Theta_g^1, \Theta_g^2$ by considering whether the other three hub vertices of $I_g^\theta$ are in $\Theta_{g+1}^1$ or not, we obtain Eq.~\eqref{Dom01}.
For Eq.~\eqref{Dom02}, we can prove it similarly.
\end{proof}

\begin{lemma}\label{leSGDom05}
For network $I_g, g\ge 3, \gamma_g^1\ge \gamma_g^2.$
\end{lemma}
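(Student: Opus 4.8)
The plan is to argue by induction on $g$, using the recurrences of Lemma~\ref{leSGDom04} for the inductive step. The guiding observation is that the inequality $\gamma_g^1\ge\gamma_g^2$ is precisely what is needed to resolve the two minima in~\eqref{Dom01} and~\eqref{Dom02}, and that resolving them then forces the same inequality at generation $g+1$. So the statement is essentially self-propagating once a suitable base case is in place.

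First I would settle the base case $g=3$ by a direct count on $I_3=\mathcal{G}_2$. This graph has the three hubs $A,B,C$ together with three degree-two vertices, one adjacent to each pair of hubs. Taking all three hubs already covers every edge, so $\gamma_3^2=3$. If only two hubs are permitted, say $A$ and $B$, then the two neighbors of the excluded hub $C$ (other than $A,B$) are forced into the cover, giving cardinality $4$; hence $\gamma_3^1=4$. Thus $\gamma_3^1=4\ge 3=\gamma_3^2$. I would also remark that the inequality genuinely fails one generation earlier: $I_2$ is a triangle with $\gamma_2^1=2<3=\gamma_2^2$, which explains the restriction to $g\ge 3$.

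For the inductive step, assume $\gamma_g^1\ge\gamma_g^2$ for some $g\ge 3$. In~\eqref{Dom01} the two candidate values differ by $(3\gamma_g^1-2)-(2\gamma_g^1+\gamma_g^2-3)=(\gamma_g^1-\gamma_g^2)+1\ge 1>0$, so the minimum is the second term and $\gamma_{g+1}^1=2\gamma_g^1+\gamma_g^2-3$. In~\eqref{Dom02} the two candidates differ by $(\gamma_g^1+2\gamma_g^2-3)-(3\gamma_g^2-3)=\gamma_g^1-\gamma_g^2\ge 0$, so the minimum is again the second term and $\gamma_{g+1}^2=3\gamma_g^2-3$. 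Subtracting then yields $\gamma_{g+1}^1-\gamma_{g+1}^2=2(\gamma_g^1-\gamma_g^2)\ge 0$, which closes the induction.

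I expect the inductive step to be mechanical once the recurrences are in hand; the only real care lies in the base case, where one must resist the intuition that forcing all three hubs into a cover should be the more expensive option. The point is the opposite: excluding a hub compels all of its many neighbors to enter the cover, so the two-hub covers are in fact the larger ones from generation $3$ onward, and this gap, once opened, is preserved and even doubled by the recurrences.
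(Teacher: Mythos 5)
Your proof is correct and follows essentially the same route as the paper: induction on $g$ with the hand-computed base case $\gamma_3^1=4\ge 3=\gamma_3^2$, then using the induction hypothesis to resolve both minima in Lemma~\ref{leSGDom04}, yielding $\gamma_{g+1}^1-\gamma_{g+1}^2=2(\gamma_g^1-\gamma_g^2)\ge 0$. You are in fact slightly more careful than the paper, whose proof text writes $2\gamma_g^1+\gamma_g^2-2$ (a typo inconsistent with Eq.~\eqref{Dom01}); your version with the constant $-3$ matches the stated recurrence, and the induction closes either way.
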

\begin{proof}
We prove this lemma by mathematical induction on g. For $g=3$, we can obtain $\gamma_3^1=4, \gamma_3^2=3$ by hand. Thus the basis step holds immediately. Assuming that the lemma holds for $g=t,t\ge3.$ Then, from Eq.~\eqref{Dom01}, we have $\gamma_{g+1}^1=min\{3\gamma_g^1-2, 2\gamma_g^1+\gamma_g^2-2\}.$ By induction hypothesis, we have
\begin{equation}
\gamma_{g+1}^1=2\gamma_g^1+\gamma_g^2-2.
\end{equation}
Analogously, we can obtain the following relations: 
\begin{equation}
\gamma_{g+1}^2=3\gamma_g^2-3.
\end{equation}
Using the induction hypothesis again, we have $\gamma_{g+1}^1\ge \gamma_{g+1}^2.$ Therefore, the lemma is true for g=t+1. This concludes the proof of the lemma.
\end{proof}

\begin{theorem}\label{thm1}
The k-power domination number of $\mathcal{G}_g,$ is
$$\gamma_{P,k}(\mathcal{G}_g)=\left\{\begin{aligned}
&\frac{3^{g-n-1}+3}{2}&, g\ge n+1\\
&1&, g\le n\end{aligned}\right.$$, where the integer n is a function of k:
\begin{equation}
n=[1+log_2(k+1)]
\end{equation}
\end{theorem}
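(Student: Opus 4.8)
The plan is to assemble the theorem entirely from the lemmas already proved, splitting along the two branches of the formula, but first making the definition of $n$ explicit. Writing $[\cdot]$ for the integer part, the equality $n=[1+\log_2(k+1)]$ says precisely that $n\le 1+\log_2(k+1)<n+1$, i.e. $2^{n-1}\le k+1<2^n$, which rearranges to
\begin{equation}
2^{n-1}-1\le k\le 2^n-2.
\end{equation}
This is exactly the hypothesis on $k$ demanded by Lemma~\ref{leSGDom02}, so $n$ is defined to be the index for which that lemma is applicable; establishing this identity is the glue binding the definition of $n$ to all the machinery developed above, and I would record it at the outset.

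For the easy branch $g\le n$ I would invoke Lemma~\ref{leSGDom01} directly. Since $g\le n$ gives $2^{g-1}-1\le 2^{n-1}-1\le k$, the hypothesis $k\ge 2^{g-1}-1$ of that lemma is satisfied, whence $\gamma_{P,k}(\mathcal{G}_g)=1$, which is the lower line of the stated formula.

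For the main branch $g\ge n+1$ I would apply Lemma~\ref{leSGDom02} (valid because $g\ge n$ and the range of $k$ above holds) to obtain $\gamma_{P,k}(\mathcal{G}_g)=\phi_{g-n+1}$. Setting $m=g-n+1\ge 2$, it then remains to evaluate $\phi_m$. By Lemma~\ref{leSGDom03}, $\phi_m=\min\{\gamma_m^1,\gamma_m^2\}$, and by Lemma~\ref{leSGDom05}, $\gamma_m^1\ge\gamma_m^2$ for $m\ge 3$, so $\phi_m=\gamma_m^2$ on that range. The recurrence extracted inside the proof of Lemma~\ref{leSGDom05}, namely $\gamma_{m+1}^2=3\gamma_m^2-3$, together with the hand value $\gamma_3^2=3$, is a first-order linear recurrence; subtracting the fixed point $3/2$ and iterating gives $\gamma_m^2=\tfrac{3^{m-2}+3}{2}$. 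Substituting $m=g-n+1$ produces $\phi_{g-n+1}=\tfrac{3^{g-n-1}+3}{2}$, the upper line of the formula.

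The one spot requiring care, and the main obstacle, is the boundary generation $g=n+1$, i.e. $m=2$, where Lemma~\ref{leSGDom05} does not apply (indeed $\gamma_2^1=2<3=\gamma_2^2$, which is why that lemma starts at $g\ge 3$). Here I would compute $\phi_2$ by hand: $I_2=\mathcal{G}_1$ is a triangle, whose vertex cover number is $2$, so $\phi_2=2$. Reassuringly, the closed form $\tfrac{3^{m-2}+3}{2}$ already evaluates to $2$ at $m=2$ and to $3$ at $m=3$, so the single expression simultaneously absorbs the base case and the recursively generated values, and no separate clause is needed in the final statement. The residual work is then purely routine: the index bookkeeping among the generation $g$, the shift $n$, and the recurrence variable $m$, and a consistency check of the floor computation of $n$ (for instance the illustrated case $k=3$ gives $n=3$, with $3\le 3\le 6$ as required).
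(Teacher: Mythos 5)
Your proposal is correct and follows essentially the same route as the paper: Lemma~\ref{leSGDom01} for $g\le n$, Lemma~\ref{leSGDom02} to reduce the case $g\ge n+1$ to computing $\phi_{g-n+1}$, and Lemmas~\ref{leSGDom03} and~\ref{leSGDom05} together with the recurrence $\gamma_{m+1}^2=3\gamma_m^2-3$ to obtain the closed form. If anything, your write-up is more careful than the paper's: you make the equivalence $n=[1+\log_2(k+1)]\iff 2^{n-1}-1\le k\le 2^n-2$ explicit, you treat the boundary generation $m=2$ separately, and you use the base value $\gamma_3^2=3$ consistently, whereas the paper's own proof text contains internal slips (it asserts $\phi_3=\gamma_3^2=6$ and writes the recurrence as $\phi_{g+1}=3\phi_g-2$, both at odds with its Lemma~\ref{leSGDom05} and with the final formula $\tfrac{3^{g-n-1}+3}{2}$, which your derivation reproduces correctly).
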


\begin{proof} According to the Lemma\ref{leSGDom05}, $\gamma_g^1> \gamma_g^2, g\ge 3.$ Using the Eq.\ref{Dom01}, we obtain
\begin{equation}
\phi_{g+1}=\gamma_{g+1}^2=min\{2\gamma_g^1+\gamma_g^2-2, 3\gamma_g^2-3\}=3\gamma_g^2-3=3\phi_g-2.
\end{equation}
Since the $\phi_3=\gamma_3^2=6,$ we get
\begin{equation}
\phi_g=\frac{3^{g-1}+3}{2}, g\ge 3.
\end{equation}
For $g=1,2,$ we obtain $\phi_1=1$, $\phi_2=2$ and $\phi_3=3$ by hand. For $g\le n, \gamma_{P,k}(\mathcal{G}_g)=1,$ because $P_{\mathcal{G}_g,k}^\infty(\{A_g\})=\mathcal{G}_g$ by Lemma \ref{leSGDom01}. Using the Lemma \ref{leSGDom02} we yield
\begin{equation}
\gamma_{P,k}(\mathcal{G}_g)=\phi_{g-n+1}=\frac{3^{g-n-1}+3}{2},
\end{equation}
which completes the proof of the theorem.
\end{proof}

\section{$K$-power domination number of Sierpi\'nski graph}

Sierpi\'nski graph is an important graph in the graph theory. Although the domination number, the independence number and the number of maximum independent sets of Sierpi\'nski graph have been determined, the $k$-power domination number in the Sierpi\'nski graph is still unknown. Next we determine the $k$-power domination number in the Sierpi\'nski graph, and compare the result with that of the above-studied scale-free network, trying to reveal the effect of scale-free property on the k-power domination number.

\subsection{Construction of Sierpi\'nski graph}
The Sierpi\'nski graph can be constructed in an iterative way. The first generation of Sierpi\'nski graph, denoted as $\mathcal{S}_1$, is a triangle containing three vertices and edges. Suppose the $(g-1)$-generation of Sierpi\'nski graph is defined, the $g$-generation of Sierpi\'nski graph is obtained by making three replicas of $\mathcal{S}_{g-1}$, denoted as $\mathcal{S}_{g-1}^{\theta}$, $\theta=1,2,3$, and merging these replicas at their outmost vertices which are the three vertices in the Sierpi\'nski graph with degree two. We denote the outmost vertices used above as follows: let $A_{g}^{\theta}$, $B_{g}^{\theta}$, and $C_{g}^{\theta}$ be the outmost vertices of $\mathcal{S}_g^{\theta}$. In the iterative step, $A_g^1$, $B_g^2$, and $C_g^3$ become the outmost vertices $A_{g+1}$, $B_{g+1}$ and $C_{g+1}$ of $\mathcal{S}_{g+1}$, see Fig.~\ref{F8}.

\begin{figure}
	\begin{center}
		\includegraphics[width=0.8\linewidth]{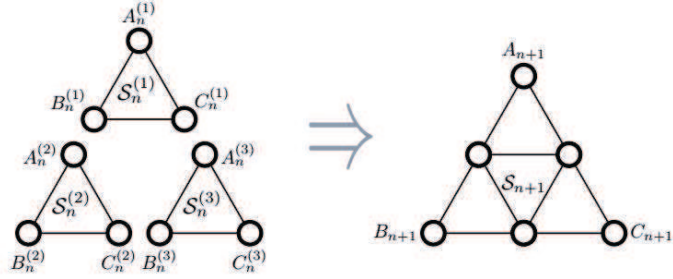}
	\end{center}
	\caption[kurzform]{The iteration process for the Sierpi\'nski graph.} \label{F8}
\end{figure}

By calculation, the vertices and edges of the Sierpi\'nski graph are exactly the same as the pseudofractal scale-free web, which are $N_g=(3^{g}+3)/2$ and $E_g=3^{g+1}$, respectively.

\subsection{$K$-power domination number}

Because degrees of vertices in $\mathcal{S}_g$ are 2 or 4, $\gamma_{P, k}(\mathcal{S}_g)=1$ hold for $k\ge4.$ Next we calculate $\gamma_{P, k}(\mathcal{S}_g)$ for $k=1,2,3.$

\begin{figure}
	\begin{center}
		\includegraphics[width=0.8\linewidth]{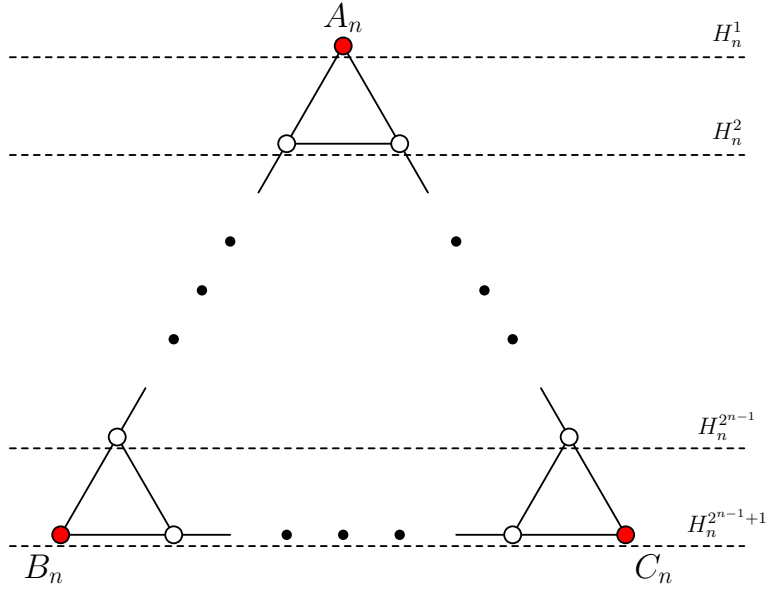}
	\end{center}
	\caption[kurzform]{Illustration of $H_g^i$, $1\le i\le 2^{g-1}+1$.} \label{F9}
\end{figure}

\begin{theorem}\label{thm02}
For $k=2,3, \gamma_{P, k}(\mathcal{S}_g)=1$
\end{theorem}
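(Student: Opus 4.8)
The plan is to prove that a single, well-chosen vertex $k$-power dominates all of $\mathcal{S}_g$ for $k=2,3$, by induction on $g$ using the self-similar decomposition of $\mathcal{S}_g$ into three copies $\mathcal{S}_{g-1}^\theta$ glued at the outmost vertices. Since every vertex of $\mathcal{S}_g$ has degree $2$ or $4$, the propagation rule is quite permissive when $k\ge 2$: any monitored vertex with at most $k$ unmonitored neighbors forces all its neighbors. I would take $\mathcal{D}=\{A_g\}$ as the candidate dominating set and show $P_{\mathcal{S}_g,k}^\infty(\{A_g\})=V(\mathcal{S}_g)$, mirroring the structure of the proof of Lemma~\ref{leSGDom01}.

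First I would settle the base cases $g=1$ (a triangle, trivial) and $g=2$ by hand, checking that starting from the closed neighborhood of a corner vertex the propagation fills the whole graph for $k=2$ and $k=3$. For the inductive step, assume $P_{\mathcal{S}_t,k}^\infty(\{A_t\})=V(\mathcal{S}_t)$. In $\mathcal{S}_{t+1}$ the vertex $A_{t+1}$ is identified with a corner of the copy $\mathcal{S}_t^1$. The crucial structural fact is that, unlike the scale-free web, the Sierpi\'nski graph has bounded degree: the glued (outmost) vertices have degree exactly $4$ in $\mathcal{S}_{t+1}$, so they have at most $4$ neighbors total and hence at most $3$ neighbors lying outside any one copy. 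I would argue that the propagation within $\mathcal{S}_t^1$ proceeds just as in the isolated copy — the only vertices whose degrees changed from $\mathcal{S}_t$ to $\mathcal{S}_{t+1}$ are the two shared corners, and their degree rises only to $4$, so the condition $|N[v]\setminus P|\le k$ is no harder to satisfy once $k\ge 2$ (for the $k=3$ case) or must be checked carefully (for $k=2$). This yields $V(\mathcal{S}_t^1)\subseteq P_{\mathcal{S}_{t+1},k}^\infty(\{A_{t+1}\})$.

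Next I would propagate across the gluing corners. Once all of $\mathcal{S}_t^1$ is monitored, the shared corner vertex joining $\mathcal{S}_t^1$ to the next copy is monitored and has degree $4$ with at most $2$ of its neighbors lying in the unmonitored copy; since $k\ge 2$ this corner forces its outside neighbors, seeding the closed neighborhood of a corner of the second copy. Applying the induction hypothesis (in the form that a corner's closed neighborhood suffices to dominate a copy) then fills the second copy, and repeating the argument fills the third. Concatenating these gives $V(\mathcal{S}_{t+1})\subseteq P_{\mathcal{S}_{t+1},k}^\infty(\{A_{t+1}\})$, completing the induction.

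I expect the main obstacle to be the $k=2$ case, where the propagation condition $|N[v]\setminus P|\le 2$ is tight at the degree-$4$ corner vertices: I must verify that at each gluing corner at most two neighbors remain unmonitored at the moment propagation needs to cross, which requires tracking the order in which vertices get monitored rather than merely counting degrees. The cleanest way to handle this is to phrase the induction hypothesis not just as ``$\{A_g\}$ dominates $\mathcal{S}_g$'' but as a stronger statement recording that the outmost corners are reached and that propagation can exit a copy through a corner using only $\le 2$ outgoing edges — an analogue of the ``condition 1'' device used for the scale-free web. For $k=3$ the bound $3$ comfortably exceeds the number of external neighbors of any corner ($\le 3$), so that case follows more directly from the degree-$4$ observation.
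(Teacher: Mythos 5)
Your proposal is correct in substance, but it takes a genuinely different route from the paper. The paper does not use the self-similar decomposition for this theorem at all: it fixes $g$, slices $\mathcal{S}_g$ into horizontal rows $H_g^i$ below the top corner $A_g$, and proves by induction on the propagation step $n$ that $P_{\mathcal{S}_g,2}^n(\{A_g\})$ is exactly the union of the first $n+2$ rows --- every vertex in the lowest monitored row has at most two unmonitored neighbors, all lying in the next row down, so the monitored region sweeps down one row per step; the case $k=3$ is then dispatched by the monotonicity $\gamma_{P,3}(\mathcal{S}_g)\le\gamma_{P,2}(\mathcal{S}_g)$. Your generation-induction with a strengthened hypothesis (the outmost corners get monitored but are never needed as forcing vertices, an analogue of the paper's condition 1) does work: non-corner vertices of a copy have all their neighbors inside that copy, so propagation replicates the isolated-copy run there; afterwards a monitored glue vertex has exactly two unmonitored neighbors, both in the adjacent copy, and seeds that copy's corner neighborhood. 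What the paper's row argument buys is that this bookkeeping is automatic --- the layered sweep guarantees the forcing vertex is never a bottom corner --- whereas your approach buys uniformity with the method of Lemma~\ref{leSGDom01} and makes structurally transparent why propagation crosses glued corners in the gasket but stalls at high-degree hubs in the scale-free web. One caveat: your claim that $k=3$ ``follows more directly from the degree-$4$ observation'' is off. A glued corner has exactly two external neighbors relative to each copy, so the dangerous configuration is two unmonitored neighbors inside plus two outside, i.e.\ four, which defeats $k=3$ just as it defeats $k=2$; hence $k=3$ requires either the same strengthened hypothesis or, most simply, monotonicity from the $k=2$ case, as the paper does.
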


\begin{proof}
First we calculate $\gamma_{P, 2}(\mathcal{S}_g)$. If $\gamma_{P, 2}(\mathcal{S}_g)=1$, note that $\gamma_{P, 3}(\mathcal{S}_g)\le \gamma_{P, 2}(\mathcal{S}_g)$ and $\gamma_{P, 3}(\mathcal{S}_g)$ is a positive integer, we have that $\gamma_{P, 3}(\mathcal{S}_g)=1$. In the graph, we put the outmost vertex $A_g$ in the upmost position of the graph, see Fig.~\ref{F9}.
The vertices of $\mathcal{S}_g$ can be classified into $2^{g-1}+1$ classes: $H_g^i, 1\le i\le2^{g-1}+1,$ which consists of the vertices in the $i$th row of the graph following the direction from up to down. We denote $R_g^n$ as $\bigcup_{i=1}^nH_g^i.$ It is obvious that $P_{\mathcal{S}_g,2}^0(\{A_g\})=R_g^2.$ Next we prove that $P_{\mathcal{S}_g,2}^n(\{A_g\})=R_g^{n+2}$ holds for $0\le n\le2^{g-1}-1$ by induction on $n$. The basis step holds immediately, as is shown above. Suppose the assertion holds for $n=t$, according to the procedure of constructing $\mathcal{S}_g$, there are at most two vertices in $N(v)\setminus (V(\mathcal{S}_g)\setminus R_g^{t+2})$ for any $v$ in $H_g^{t+2}$. So we have 
\begin{equation}
P_{\mathcal{S}_g,2}^{t+1}(\{A_g\})=R_g^{t+3}.
\end{equation}
Therefore, $P_{\mathcal{S}_g,2}^n(\{A_g\})=R_g^{n+2}$ is true for $n=t+1.$ This concludes the proof of the lemma.
\end{proof}

Next, we will calculate $\gamma_{P, k}(\mathcal{S}_g)$ for $k=1$.

\begin{lemma}\label{leSGDom06}
The 1-power domination number for $\mathcal{S}_g$, $g\ge 2$, satisfies $\gamma_{P, 1}(\mathcal{S}_g)\ge \frac{3^{g-2}+1}{2}$.
\end{lemma}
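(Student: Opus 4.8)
The plan is to argue by induction on $g$ using the self-similar decomposition of $\mathcal{S}_g$ into three copies $\mathcal{S}_{g-1}^1,\mathcal{S}_{g-1}^2,\mathcal{S}_{g-1}^3$ that are glued pairwise at three shared \emph{inner} corners, while the three global outmost vertices $A_g,B_g,C_g$ (of degree $2$) each belong to a single copy. Writing $L(g)=\tfrac{3^{g-2}+1}{2}$, one checks that $L(2)=1$ and that $L$ obeys the recursion $L(g)=3L(g-1)-1$. Hence it suffices to show, for any $1$-power dominating set $S$ of $\mathcal{S}_g$, that $|S|\ge 3L(g-1)-1$, after which the inequality $\gamma_{P,1}(\mathcal{S}_g)\ge L(g)$ follows from the inductive hypothesis $\gamma_{P,1}(\mathcal{S}_{g-1})\ge L(g-1)$. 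The difficulty is that the three copies are \emph{not} independent: the $k=1$ propagation can cross the glued corners, so one cannot simply add up the per-copy requirements.

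The crucial and hardest step is to control how much a copy can be $1$-power dominated for free by propagation entering from outside. Each inner corner has degree $4$ in $\mathcal{S}_g$, with two neighbors in each of the two copies it joins; under the $k=1$ rule such a vertex can force a neighbor only when at most one of its four neighbors is still undominated, i.e. only after three of them are already dominated. I would prove a \emph{localization lemma}: once the restriction $S_\theta:=S\cap V(\mathcal{S}_{g-1}^\theta)$ is fixed, external propagation can influence the interior of $\mathcal{S}_{g-1}^\theta$ only by possibly pre-dominating its (at most two) inner corners and executing at most one forced inward step at each, so that the global process restricted to $\mathcal{S}_{g-1}^\theta$ is dominated by a boundary-assisted $k=1$ process inside $\mathcal{S}_{g-1}^\theta$ in which those corners are given. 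This is exactly where the degree-$4$ bottleneck at the merge points does the work; it is also the reason the clean row sweep used for $k=2$ in Theorem~\ref{thm02} breaks down here, since under $k=1$ an interior vertex still has two undominated neighbors below it and cannot force downward.

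To convert the localization lemma into numbers I would introduce, in parallel with the quantities $\gamma_g^1,\gamma_g^2$ used for the pseudofractal web, boundary-conditioned domination numbers $\beta_g^{(j)}$ for $j=0,1,2$, defined as the minimum number of seed vertices needed to $1$-power dominate a copy of $\mathcal{S}_g$ when $j$ of its corners are supplied from outside (pre-dominated and allowed one forced inward step). These satisfy $\beta_g^{(0)}=\gamma_{P,1}(\mathcal{S}_g)\ge\beta_g^{(1)}\ge\beta_g^{(2)}$, and the self-similar gluing yields a system of min-recursions expressing each $\beta_{g+1}^{(j)}$ in terms of the $\beta_g^{(\cdot)}$, obtained by enumerating which of the three inner corners of the assembled graph are seeded, dominated from within, or supplied from outside. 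Solving this system, with the base values $L(2)=1$ and $L(3)=2$ checked by hand, should collapse the relevant minimum to $\beta_{g+1}^{(0)}=3\beta_g^{(0)}-1$, which is precisely $L(g)=3L(g-1)-1$; the correction $-1$, rather than the naive $-3$ coming from three shared corners, is forced because supplying a corner from outside does not lower a copy's requirement enough to let two distinct copies both save a vertex at the same shared corner.

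I expect the main obstacle to be the localization lemma of the second paragraph: making rigorous the statement that propagation cannot seep deeply into a copy through a degree-$4$ merge corner, and hence that the boundary assistance is worth at most one forced step per corner. Everything else is bookkeeping: once the lemma and the resulting system for $\beta_g^{(j)}$ are in place, the chain $\gamma_{P,1}(\mathcal{S}_g)\ge 3L(g-1)-1=L(g)$ closes the induction and establishes $\gamma_{P,1}(\mathcal{S}_g)\ge\tfrac{3^{g-2}+1}{2}$ for $g\ge 2$.
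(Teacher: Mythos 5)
Your localization lemma is sound (and is essentially the right idea), but the bookkeeping that is supposed to turn it into the bound does not close, and this is a genuine gap rather than a detail. With your quantities $\beta_g^{(j)}$, the decomposition of $\mathcal{S}_{g+1}$ into three copies of $\mathcal{S}_g$ gives only
\[
|S| \;=\; \sum_{\theta=1}^{3}|S_\theta| \;-\; m \;\ge\; 3\beta_g^{(2)}-3,
\]
where $m\le 3$ is the number of shared corners lying in $S$ (each such seed is counted in two copies). The recursion $f(g+1)=3f(g)-3$ started from $f(2)=1$ gives $f(3)=0$ and then negative values, so the $\beta^{(j)}$ system by itself proves nothing: the whole content of the lemma is in upgrading the correction from $-3$ to $-1$. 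Your stated reason for that upgrade --- that two distinct copies cannot both save a vertex at the same shared corner --- is false. In $\mathcal{S}_3$, let $p_{12},p_{13},p_{23}$ be the three shared corners and let $n_1$ be the interior vertex of the copy containing $C_3$ that is adjacent to both $p_{13}$ and $p_{23}$; then $S=\{p_{12},n_1\}$ is a $1$-power dominating set of size $2=L(3)$, and in it the copy containing $A_3$ and the copy containing $B_3$ are each handled by the single shared corner seed $p_{12}$, with propagation re-entering them from outside through $p_{13}$ and $p_{23}$. So two copies can simultaneously achieve their minimum $L(2)=1$ using the same shared corner; what must actually be excluded is realizing such savings at \emph{two or three} shared corners at once, i.e.\ a statement like ``if $m\ge 2$ then $\sum_\theta|S_\theta|\ge 3L(g)+m-1$.'' Your $\beta_g^{(j)}$'s cannot even express this, since they record only how many corners receive outside help, not whether the seeds themselves sit at shared corners; the refined corner-tracking would need its own induction, and it is exactly the piece that is missing.

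The paper avoids all of this by running your localization idea at the scale where it is absolute rather than approximate: decompose $\mathcal{S}_g$ into $3^{g-2}$ copies of $\mathcal{S}_2$ glued at their corners. Under $k=1$, no propagation can enter the interior of an $\mathcal{S}_2$ copy from outside at all: each corner of a copy has exactly two neighbors interior to the copy, so as long as all three interior vertices are undominated every corner has two undominated neighbors and can never force, while the interior vertices (all of whose neighbors lie inside the copy) can never belong to $N[S]$ if the copy contains no seed. Hence every one of the $3^{g-2}$ copies must contain a vertex of $S$; since a vertex of $\mathcal{S}_g$ lies in at most two such copies, $2|S|\ge 3^{g-2}$, i.e.\ $|S|\ge\bigl\lceil 3^{g-2}/2\bigr\rceil=(3^{g-2}+1)/2$. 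No induction, no boundary-conditioned quantities, no correction terms. If you want to salvage your route, the cleanest fix is to abandon the three-copy induction and use this direct count over $\mathcal{S}_2$ blocks.
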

\begin{proof}In fact, we can join $3^{g-2}$ copies, $\mathcal{S}^\theta_2, 1\le \theta\le3^{g-2}$, $g\ge 2$ of $\mathcal{S}_2$ at their outmost vertices to get $\mathcal{S}_g$ Since the degrees of the outmost vertices of $\mathcal{S}_2$ are 2 and S is a 1-power dominating set of $\mathcal{S}_g$, we must have $\mathcal{S}_2^\theta\cap S\neq\emptyset$. According to the construction of $\mathcal{S}_g$, any vertex of $\mathcal{S}_g$ can't be shared by more than two replicas of $\mathcal{S}_2$. Then we can evaluate the lower bound of $\gamma_{P, 1}(\mathcal{S}_g)$:
\begin{equation}
\gamma_{P, 1}(\mathcal{S}_g)\ge\frac{3^{g-2}+1}{2}.
\end{equation}
\end{proof}~\\

Next we define the condition 2 and condition 3. A subset of $V(\mathcal{S}_g)$ that satisfies the condition 3 is a 1-power domination set. A subset of $V(\mathcal{S}_g)$ that satisfies the condition 2 is not a 1-power domination set, but it helps to construct a 1-power domination set in $\mathcal{S}_{g+1}$.

\begin{definition}\label{deSGDom04}
A subset S of $V(\mathcal{S}_g)$ satisfies the condition 2 if it satisfies the following condition:
\par(1)The cardinality of S is $\frac{3^{g-2}+1}{2}$,
\par(2)If we remove $P_{\mathcal{S}_g, 1}^{\infty}(S)$ and the edges linked to these vertices, the graph left forms a path tree that starts from one outmost vertex and ends at another outmost vertex,
\par(3)The outmost vertex that is not in the path tree mentioned in (2) is in S.
\end{definition}

Fig.~\ref{F101112} shows a set satisfies the condition 2 and the path tree corresponds to it which is mentioned in the definition of condition 2. Note that if we see $\mathcal{S}_n$ as a component of $\mathcal{S}_g$ for $g> n$ and let S be a 1-power domination set of $\mathcal{S}_g$, then $S\cap V(\mathcal{S}_n)$ satisfies the conditions in the Definition~\ref{deSGDom04}. If $P_{\mathcal{S}_g, 1}^{\infty}(S)$ contains a vertex in the path tree (defined in the Definition~\ref{deSGDom04}), then all the vertices in the path tree will be in the $P_{\mathcal{S}_g, 1}^{\infty}(S)$ considering the degree of every vertex in the path tree.

\begin{definition}\label{deSGDom05}
A subset $S$ of $V(\mathcal{S}_g)$ satisfies the condition 3, if S satisfies the following conditions:
\par(1)The cardinality of S is $\frac{3^{g-2}+1}{2}$,
\par(2)S satisfies $P_{\mathcal{S}_g,1}^\infty(S)=\mathcal{S}_g$,
\par(3)Three outmost vertices of $\mathcal{S}_g$ are not in S.
\end{definition}

Fig.~\ref{F101112} shows a set that satisfies the condition 3.
\begin{lemma}\label{leSGDom07}
There exist subsets $\Theta_g^1, \Theta_g^2$ of $V(\mathcal{S}_g)$  for $g\ge3$. $\Theta_g^1$ Satisfies the condition 2 and $\Theta_g^2$ satisfies the condition 3.
\end{lemma}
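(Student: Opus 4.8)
The plan is to prove both assertions simultaneously by induction on $g$, exploiting the fact that $\mathcal{S}_{g+1}$ is the union of three copies $\mathcal{S}_g^1,\mathcal{S}_g^2,\mathcal{S}_g^3$ glued at three merge vertices. Write $P_{12},P_{23},P_{13}$ for the three merge vertices, each shared by two copies and of degree $4$ in $\mathcal{S}_{g+1}$, and $A_{g+1}\in\mathcal{S}_g^1$, $B_{g+1}\in\mathcal{S}_g^2$, $C_{g+1}\in\mathcal{S}_g^3$ for the three global outmost vertices of degree $2$, so that the outmost triple of $\mathcal{S}_g^1$ is $\{A_{g+1},P_{12},P_{13}\}$, of $\mathcal{S}_g^2$ is $\{B_{g+1},P_{12},P_{23}\}$, and of $\mathcal{S}_g^3$ is $\{C_{g+1},P_{13},P_{23}\}$. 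Because $\mathcal{S}_g$ is symmetric under permuting its three outmost vertices, the induction hypothesis lets me drop a condition-$2$ set (resp.\ a condition-$3$ set) into any copy with its distinguished outmost vertex placed wherever I like. The base case $g=3$ I would settle by exhibiting the two sets of size $\frac{3^{1}+1}{2}=2$ by hand, as in Fig.~\ref{F101112}, checking conditions $2$ and $3$ directly.

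For the inductive step, both $\Theta_{g+1}^1$ and $\Theta_{g+1}^2$ are assembled identically in copies $1$ and $2$. In $\mathcal{S}_g^1$ I place $\Theta_g^1$ oriented so that its in-$S$ outmost vertex is $P_{12}$ and its leftover path runs from $A_{g+1}$ to $P_{13}$; in $\mathcal{S}_g^2$ I place $\Theta_g^1$ oriented so that its in-$S$ outmost vertex is again $P_{12}$ and its leftover path runs from $B_{g+1}$ to $P_{23}$. Since $\mathcal{S}_g^1$ and $\mathcal{S}_g^2$ meet only in $P_{12}$ and both of these sets contain $P_{12}$, that shared vertex is counted once, which is precisely the saving that makes $|\Theta_{g+1}^i|=3\cdot\frac{3^{g-2}+1}{2}-1=\frac{3^{g-1}+1}{2}$ come out correctly.

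The two constructions then diverge only in $\mathcal{S}_g^3$. For $\Theta_{g+1}^2$ I place a condition-$3$ set $\Theta_g^2$ in $\mathcal{S}_g^3$; it $1$-power dominates all of $\mathcal{S}_g^3$ (in particular $P_{13}$ and $P_{23}$) and avoids every outmost vertex of $\mathcal{S}_g^3$, so $C_{g+1}\notin S$. Once $P_{13}\in P^\infty$, the remark following Definition~\ref{deSGDom04} forces the entire $A_{g+1}$--$P_{13}$ path of copy $1$ into $P^\infty$, and likewise $P_{23}$ triggers copy $2$'s path; hence everything is dominated, none of $A_{g+1},B_{g+1},C_{g+1}$ lies in $S$, and condition $3$ holds. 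For $\Theta_{g+1}^1$ I instead place $\Theta_g^1$ in $\mathcal{S}_g^3$, oriented so its in-$S$ outmost vertex is $C_{g+1}$ and its leftover path runs from $P_{13}$ to $P_{23}$. Now $P_{12}$ is dominated, blocking the direct $A$--$B$ route, while none of the three copies dominates $P_{13}$ or $P_{23}$ from its own seed set (each has one of these as a leftover-path endpoint); the three leftover paths therefore splice at $P_{13}$ and $P_{23}$ into a single path from $A_{g+1}$ to $B_{g+1}$, with $C_{g+1}\in S$, giving condition $2$.

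The hard part will be the propagation bookkeeping rather than the cardinality count. For $\Theta_{g+1}^1$ I must verify that the spliced object is genuinely a \emph{path} (no branching at the junctions $P_{13},P_{23}$, each of which must keep exactly two undominated path-neighbours) and, more delicately, that it is a \emph{stable} leftover: I must show that $P^\infty$ halts there, i.e.\ that every vertex already in $P^\infty$ has either no undominated neighbour or at least two, so that the $k=1$ rule never fires into the path and collapses it to a full domination. This is exactly the stability already guaranteed inside each copy by Definition~\ref{deSGDom04}; the only genuinely new checks are at the two degree-$4$ junction vertices, where I must confirm that the $k=1$ rule does not propagate across. Symmetric care is needed for $\Theta_{g+1}^2$ to see that the triggering cascades along the \emph{entire} path of each copy and not merely part of it, which is again precisely the content of the remark after Definition~\ref{deSGDom04}.
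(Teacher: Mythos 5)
Your construction is essentially the paper's own proof: induction on $g$ with the base case $g=3$ read off from Fig.~\ref{F101112}, assembling $\Theta_{g+1}^2$ from one condition-3 set plus two condition-2 sets whose in-$S$ outmost vertices are identified at a single merge vertex (which is exactly where the $-1$ in the cardinality $3\cdot\frac{3^{g-2}+1}{2}-1$ comes from), and $\Theta_{g+1}^1$ from three condition-2 sets whose leftover paths splice into a single path between two global outmost vertices, with the third outmost vertex in $S$; the paper merely permutes which copy plays which role (its condition-3 set sits in copy 1, yours in copy 3). Your explicit treatment of stability at the degree-4 junction vertices is a check the paper passes over in silence, and your observation that within-copy stability (each dominated vertex has $0$ or $\ge 2$ undominated neighbours per copy, so a merge vertex has $0$, $2$, $3$, or $4$ in the union) settles it, so the proposal is correct and matches the paper's argument.
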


\begin{figure}
	\begin{center}
		\includegraphics[width=0.8\linewidth]{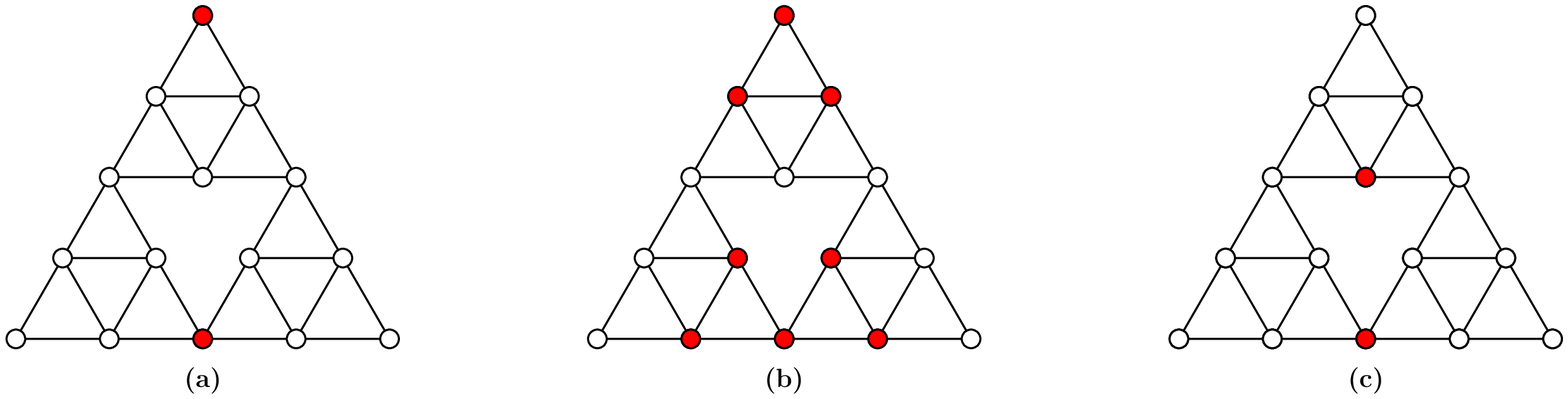}
	\end{center}
	\caption[kurzform]{In (a), the set consists of solid points is a set that satisfies the condition 2. In (b), the set consists of open points constructs a path tree that is mentioned in the definition of condition 2. In (c), the set consists of solid points is a set that satisfies the condition 3.} \label{F101112}
\end{figure}

\begin{proof}We prove the lemma by induction on g. The lemma holds immediately for $g=3$, see Fig.~\ref{F101112}, so the basis step holds immediately. Suppose the lemma holds for $g=t$. As is shown above, we can join three replicas of $\mathcal{S}_t$, $\mathcal{S}_t^\theta,$ $\theta=1,2,3,$ at their outmost vertices to get $\mathcal{S}_{t+1}$. In the process, we join $B_t^1$ and $A_t^2$ together and join $B_t^3$ and $C_t^2$ together and join $C_t^1$ and $A_t^3$ together. By induction, we can find a set $S^1$ in $\mathcal{S}_t^1$ that satisfies the condition 3. We can also find a set $S^2$ in $\mathcal{S}_t^2$ that satisfies the condition 2 whose path tree links $A_t^2$ and $B_t^2$. We can also find a set $S^3$ in $\mathcal{S}_t^3$, that satisfies the condition 2 whose path tree links $A_t^3$ and $C_t^3$. We merge $S^\theta, \theta=1,2,3,$ together to get $\Theta_{t+1}^2,$ a subset of $V(\mathcal{S}_{t+1})$.

Next we prove that $\Theta_{t+1}^2$ satisfies the condition 3. Because $C_t^2$(resp.$B_t^3$) is in the $S^2$(resp.$S^3$) and we merge $C_t^2$ and $B_t^3$ together, we have
\begin{equation}
|\Theta_{t+1}^2|=3(\frac{3^{t-2}+1}{2})-1=\frac{3^{t-1}+1}{2},
\end{equation}
so $\Theta_{t+1}^2$ satisfies (1). Note that the path tree has the property: if one vertex of the path tree is in the $P_{\mathcal{S}_{t+1},1}^\infty(S),$ then all the vertices of the path tree are in the $P_{\mathcal{S}_{t+1},1}^\infty(S).$ Because $B_t^1\in P_{\mathcal{S}_t^1,1}^\infty(\mathcal{S}^1)\subset P_{\mathcal{S}_{t+1},1}^\infty(\Theta_{t+1}^2)$, we have that all the vertices of the path tree of $\mathcal{S}_t^2$ are in the $P_{\mathcal{S}_{t+1},1}^\infty(\Theta_{t+1}^2)$ using the property of the path tree, so we get
\begin{equation}
V(\mathcal{S}_t^2)\subset P_{\mathcal{S}_{t+1},1}^\infty(\Theta_{t+1}^2).
\end{equation}
We can also get
\begin{equation}
V(\mathcal{S}_t^3)\subset P_{\mathcal{S}_{t+1},1}^\infty(\Theta_{t+1}^2)
\end{equation}
for the same reason. So we have that
\begin{equation}
V(\mathcal{S}_{t+1})=P_{\mathcal{S}_{t+1},1}^\infty(\Theta_{t+1}^2),
\end{equation}
which means that $\Theta_{t+1}^2$ satisfies (2). Since $A_t^1$ is not in the $V(\mathcal{S}_t^1)$, $B_t^2$ is not in the $V(\mathcal{S}_t^2)$ and $C_t^3$ is not in the $V(\mathcal{S}_t^3),$ then three outmost vertices of $\mathcal{S}_{t+1}$ are not in $\Theta_t^2.$ Then $\Theta_{t+1}^2$ satisfies (3). In a word, $\Theta_{t+1}^2$ satisfies the condition 3.

Next we construct a set that satisfies the condition 2. By induction, we can find $T^1\subset V(\mathcal{S}_t^1)$ whose path tree links $B_t^1$ and $C_t^1.$ We can find $T^2\subset V(\mathcal{S}_t^2)$ whose path tree links $A_t^2$ and $B_t^2.$ We can also find $T^3\subset V(\mathcal{S}_t^3)$ whose path tree links $A_t^3$ and $C_t^3.$ We merge $T^\theta, \theta=1,2,3,$ to get $\Theta_{t+1}^1.$ It is easy to verify that
\begin{equation}
|\Theta_{t+1}^1|=3(\frac{3^{t-2}+1}{2})-1=\frac{3^{t-1}+1}{2},
\end{equation}
which means $\Theta_{t+1}^1$ satisfies (1). When the vertices of $P_{\mathcal{S}_{t+1},1}^\infty(\Theta_{t+1}^1)$ and the edges linked to these vertices are removed from $\mathcal{S}_{t+1},$ the graph left forms a path tree linking $B_{t+1}$(which coincides with $C_t^3$) and $C_{t+1}$(which coincides with $C_t^3$) which means $\Theta_{t+1}^1$ satisfies (2). The fact that $\Theta_{t+1}^1$ satisfies (3) is obvious. So the lemma is true for $g=t+1.$ This concludes the proof of the lemma.
\end{proof}

\begin{theorem}\label{thm03}
The $k$-power domination number of $\mathcal{S}_g$ is
$${\gamma_{P, k}(\mathcal{S}_g)}=\left\{
\begin{aligned}
\frac{3^{g-2}+1}{2}&,& g\ge 2, k=1\\
1&,& g=1, k=1\\
1&,& k\ge2
\end{aligned}
\right.
$$.
\end{theorem}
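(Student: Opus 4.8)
The plan is to prove the statement case by case according to the value of $k$, since essentially all of the content sits in the case $k=1$. For $k\ge 2$ there is nothing new to establish: the remark preceding Theorem~\ref{thm02} already handles $k\ge 4$ (every degree in $\mathcal{S}_g$ is $2$ or $4$, so a single vertex's closed neighborhood immediately triggers propagation across the whole graph), and Theorem~\ref{thm02} gives $\gamma_{P,k}(\mathcal{S}_g)=1$ for $k=2,3$. I would therefore simply cite those two results to close out the bottom line of the piecewise formula.

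The substantive case is $k=1$, $g\ge 2$, where the claim $\gamma_{P,1}(\mathcal{S}_g)=\frac{3^{g-2}+1}{2}$ is a matching-bounds argument. The lower bound is exactly Lemma~\ref{leSGDom06}: decompose $\mathcal{S}_g$ into $3^{g-2}$ copies of $\mathcal{S}_2$, note that each copy must intersect any $1$-power dominating set (its outmost vertices have degree $2$, so with $k=1$ a copy disjoint from $S$ is never reached by propagation), and count using the fact that no vertex belongs to three copies. For the upper bound I would exhibit a $1$-power dominating set of cardinality $\frac{3^{g-2}+1}{2}$, and this is precisely the set $\Theta_g^2$ produced in Lemma~\ref{leSGDom07}: it satisfies condition~3, so by Definition~\ref{deSGDom05} it is a $1$-power dominating set of exactly this size, valid for all $g\ge 3$. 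The two remaining small cases I would check directly: $\mathcal{S}_1$ is a triangle, so one vertex dominates it and $\gamma_{P,1}=1$; and for $\mathcal{S}_2$ the formula evaluates to $1$, which I would verify by taking a degree-$4$ vertex and letting its single uncovered corner be filled in by one propagation step. This matches the separate $g=1$ branch of the statement, which is listed apart only because $\frac{3^{g-2}+1}{2}$ is not an integer at $g=1$.

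The main obstacle is the upper bound, that is, the inductive construction carried out in Lemma~\ref{leSGDom07}. The delicate point is propagating the auxiliary invariant (condition~2, the ``path tree'') through the merge of three copies $\mathcal{S}_t^\theta$ into $\mathcal{S}_{t+1}$: one needs that as soon as a single vertex of a path tree enters $P_{\mathcal{S}_{t+1},1}^\infty$, the entire tree does. This holds because with $k=1$ each path-tree vertex has at most one as-yet-undominated neighbor, so propagation sweeps the tree from one end to the other. Coordinating which of the three sub-copies carries a condition-3 set and which two carry condition-2 sets, so that the identified hub vertices line up and the cardinality bookkeeping yields exactly $3\cdot\frac{3^{t-2}+1}{2}-1=\frac{3^{t-1}+1}{2}$, is the crux of the argument; once that induction is in place, everything else is routine.
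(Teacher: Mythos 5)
Your proposal is correct and follows essentially the same route as the paper's own proof: the lower bound from Lemma~\ref{leSGDom06}, the upper bound from the condition-3 set $\Theta_g^2$ of Lemma~\ref{leSGDom07} for $g\ge 3$, hand verification of $\mathcal{S}_1$ and $\mathcal{S}_2$, and Theorem~\ref{thm02} together with the degree observation for $k\ge 2$. In fact your write-up is slightly cleaner than the paper's, which contains a typo ($\frac{3^{g-1}+1}{2}$ instead of $\frac{3^{g-2}+1}{2}$) in its matching-bounds step.
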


\begin{proof} According to the Lemma \ref{leSGDom07}, for any $g\ge3,$ there exists a subset $\Theta_g^2$ of $V(\mathcal{S}_g)$, which satisfies the condition 3. According to the definition, $\Theta_g^2$ is a 1-power dominating set of $\mathcal{S}_g.$ So we have
\begin{equation}
\gamma_{P, 1}(\mathcal{S}_g)\le\frac{3^{g-1}+1}{2}, g\ge3.
\end{equation}
Using the Lemma \ref{leSGDom06}, we can get
\begin{equation}
\gamma_{P, 1}(\mathcal{S}_g)=\frac{3^{g-1}+1}{2}, g\ge3.
\end{equation}
We can obtain $\gamma_{P, 1}(\mathcal{S}_1)=1$ and $\gamma_{P, 1}(\mathcal{S}_2)=1$ by hand. We have shown above in the Theorem~\ref{thm02} that
\begin{equation}
\gamma_{P, k}(\mathcal{S}_g)=1, k\ge2.
\end{equation}
This concludes the proof of the theorem.
\end{proof}

\section{Comparison and analysis}
In this paper, we determine the k-power domination number of two graphs of the same number of vertices and edges: pseudofractal scale-free web and the Sierpi\'nski gasket. For $k=1$, the k-power domiantion number of the pseudofractal scale-free web and the Sierpi\'nski graph are similar, both growing at an exponential rate with respect to the number of vertices. However, for $k\ge 2$, the k-power domination number of the Sierpi\'nski graph of all generations is $1$. On the contrary, as the number of generation gets bigger, the k-power domination number of the pseudofractal scale-free web is $1$ at first, but grows at an exponential rate with respect to the number of vertices later on. We maintain that difference in the behavior of k-power domination numbers can be attributed to the structual distinction between the two graphs.\\

For $k=1$, since any vertex in the pseudofractal scale-free web has a degree strictly bigger than $1$, any given vertex in the graph can only 1-power dominate a small part of the graph. As a result, as the number of vertices increases, we need more vertices to 1-power dominate the whole graph. Moreover, the 1-power domination number of the $(g+1)$-generation of the graph is almost three times that of the $g$-generation of the graph since the $(g+1)$-generation of the graph can be constructed by merging three replicas of the $g$-generation of the graph. The 1-power domination number of the Sierpi\'nski graph can also be explained like above.\\

For $k\ge 2$, note that the degrees of vertices in all generations of the Sierpi\'nski graph are uniformly bounded, while the degree distribution of vertices of the pseudofractal scale-free web obeys a power law form $P(g)\sim g^{1+\frac{ln3}{ln2}}$. When there are more vertices with big degrees, it is more likely that the propagation in the definition of k-power domination set will end at these vertices since we can only include the neighbors of an vertex if few of its neighbors are not k-power dominated and this is hard to be achieved if it has a lot of neighbors. As a result, we need more vertices to k-power dominate the whole graph, which accounts for the phenomena that as the number of vertices grows, the k-power domination number of the pseudofractal scale-free web grows at an exponential rate in the end.\\

Although we only determine the k-power domination number in a scale-free network. It is natural to believe that k-power domination number of other scale-free graphs have behaviors similar to the pseudofractal scale-free web.

\section{Conclusions}
In conclusion, the difference in the k-power domincation number in these two graphs indicates the heterogeneity of the pseudofractal scale-free web and the homogeneity of the Sierpi\'nski graph.

\section*{References}

\bibliographystyle{plain}

\bibliography{k-power}

\end{document}